\def\cal{\mathcal}
\newcommand{\field}[1]{\mathbb{#1}}
\newcommand{\C}{\field{C}}
\newcommand{\R}{\field{R}}
\newtheorem{defi}{Definition}[section]
\newtheorem{lem}[defi]{Lemma}
\newtheorem{theo}[defi]{Theorem}
\newtheorem{co}[defi]{Corollary}
\newtheorem{re}[defi]{Remark}
\thanks{The  author was partially supported by the NCN
 grant 2014-2017}
\subjclass{14 D 99, 14 R 99, 51 M 99}
\title[The Thom Conjecture for  proper polynomial mappings]{The Thom Conjecture for proper polynomial mappings} \makeatletter
\author{Zbigniew Jelonek}
\address[Z. Jelonek]{Instytut Matematyczny\\
Polska Akademia Nauk\\
\'Sniadeckich 8, 00-656 Warszawa, Poland}
\email{najelone@cyf-kr.edu.pl}
\date{\today}
\begin{document}

\maketitle

\begin{abstract}{ Let $f,g:X \to Y$ be continuous mappings. We say
that $f$ is topologically equivalent to $g$ if there exist
homeomorphisms $\Phi : X\to X$ and $\Psi: Y\to Y$ such that
$\Psi\circ f\circ \Phi=g.$ Let $X,Y$ be complex smooth irreducible
affine varieties. We show that every algebraic family $F: M\times
X\ni (m, x)\mapsto F(m, x)=f_m(x)\in Y$ of polynomial mappings
contains only a finite number of topologically non-equivalent
proper  mappings. In particular there are only a finite number of
topologically non-equivalent proper polynomial mappings $f:
\C^n\to\C^m$ of bounded (algebraic) degree. This gives a positive
answer to the Thom Conjecture in the case of proper polynomial
mappings.}
\end{abstract}

\section{Introduction}
Let $f,g:X \to Y$ be continuous mappings. We say that $f$ is {\it
topologically equivalent} to $g$ if there exist homeomorphisms
$\Phi : X\to X$ and $\Psi: Y\to Y$ such that $\Psi\circ f\circ
\Phi=g.$ In the case $X=\C^n$ and $Y=\C$ Rene Thom stated a
Conjecture that there are only finitely many topological types of
polynomials $f: X\to Y$ of bounded degree. This Conjecture was
confirmed by T. Fukuda  \cite{fuk}. Also a more general problem
was considered: how many topological types are there in the family
$P(n,m,k)$ of polynomial mapping  $f:\C^n\to\C^m$ of degree
bounded by $k$? K. Aoki and H. Noguchi \cite{a-n} showed  that
there are only a finite number of topologically non-equivalent
mappings in the family $P(2,2,k).$ Finally I. Nakai  \cite{nak}
showed that each familiy $P(n,m,k)$, where $n, m ,k > 3,$ contains
infinitely many different topological types. Hence the General
Thom Conjecture is not true. However, we show in this paper that
the General Thom Conjecture is  true in the following important
case: for every $n,m$ and $k$ there are only a finite number of
topological types of {\it proper} polynomial mappings $f:
\C^n\to\C^m$ of (algebraic) degree bounded by $k.$ In fact we
prove more: if $X,Y$ are smooth affine irreducible varieties, then
every algebraic family $\cal F$ of polynomial mappings from $X$ to
$Y$ contains only a finite number of topologically non-equivalent
proper mappings.

Our proof goes as follows. Let $M$ be a smooth affine irreducible
variety and let $\cal F$ be  a family of polynomial mappings
induced be a regular mapping $F: M\times X \to Y,$ i.e., ${\cal
F}:=\{f_m: X\ni x\mapsto F(m,x)\in Y, \ m\in M\}.$ Let us recall
that if $f: X\to Z$ is a generically finite polynomial mapping of
affine varieties, then the {\it bifurcation set} $B(f)$ of $f$ is
the set $\{ z\in Z: z\in Sing(Z) \ or\ \# f^{-1}(z)\not=\mu(f)\}$,
where $\mu(f)$ is the topological degree of $f.$  The set $B(f)$
is always closed in $Z.$   We show that there exists a Zariski
open, dense subset $U$ of $M$ such that

1)  for every $m\in U$ we have $\mu(f_m)=\mu({\cal F}),$ where we
treat $f_m$ as a mapping $f_m: X\to Z_m:=\overline{f_{m}(X)},$

2) for every $m_1, m_2\in U$ the pairs $(\overline{f_{m_1}(X)},
B(f_{m_1}))$ and $(\overline{f_{m_2}(X)}, B(f_{m_2}))$  are
equivalent via a homeomorphism, i.e., there is a homeomorphism
$\Psi : Y\to Y$ such that
$\Psi(\overline{f_{m_1}(X)})=\overline{f_{m_2}(X)}$ and
$\Psi(B(f_{m_1}))=B(f_{m_2}).$

 \vspace{3mm}

In particular the group $G=\pi_1(\overline{f_{m}(X)}\setminus
B(f_m))$ does not depend on $m\in U.$ Using  elementary facts from
the theory of topological coverings, we show that the number of
topological types of proper mappings in the family ${\cal F}_{|U}$
is bounded by the number of subgroups of $G$ of  index $\mu({\cal
F})$, hence it is finite. Then we conclude the proof by induction.
Finally, the case of arbitrary  $M$ can be  easily reduced to the
smooth, irreducible, affine case.

It is worth  noting that the real counterpart of our result is not
true. Indeed, Rene Thom  \cite{thom} found the following family
${\cal F}:=\{ f_m : \R^3\to\R^3\}_{m\in \R}$ of real polynomial
mappings:
$$X=[x(x^2+y^2-a^2)-2ayz]^2[(x+my)(x^2+y^2-a^2)-2a(y-mx)z]^2,$$
$$Y=x^2+y^2-a^2,$$
$$Z=z,$$
(here $a\in \R^*$ is a fixed constant and $m$ is a parameter). He
proved   that $f_{m_1}$ is not topologically equivalent to
$f_{m_2}$ for $m_1\not=m_2.$ It is easy to see that all mappings
in the  family $\cal F$ are proper.

\begin{re}
{\rm In this paper we use the term "polynomial mapping" for every
regular mapping $f: X\to Y$ of affine varieties.}
\end{re}

\section{Bifurcation set}
Let $X,Z$ be  affine  irreducible varieties of the same dimension
and assume that $X$ is smooth. Let $f: X\to Z$ be a dominant
polynomial mapping. It is well known that there is a Zariski open
non-empty subset $U$ of $Z$ such that for every $x_1,x_2\in U$ the
fibers $f^{-1}(x_1), f^{-1}(x_2)$ have the same number $\mu(f)$ of
points. We say that  $\mu(f)$ is the topological degree of $f.$
Recall the following (see \cite{jel}, \cite{jel1}):

\begin{defi}
{\rm Let $X,Z$ be as above and let $f : X \rightarrow Z$ be a
dominant polynomial mapping.  We say that $f$ {\it is finite at a
point} $z \in Z$ if there exists an open neighborhood $U$ of $z$
such that the mapping $ f_{|f^{-1}(U)} :f^{-1} (U)\rightarrow U$
is proper.}
\end{defi}

It is well-known that the set $S_f$ of points at which the mapping
$f$ is not finite is either empty or it is a hypersurface (see
\cite{jel}, \cite{jel1}). We say that  $S_f$ is {\it the set of
non-properness} of  $f.$

\begin{defi}
Let $X$ be a smooth affine $n$-dimensional variety and let $Z$ be
an affine variety of the same dimension. Let $f : X\to Z$ be a
generically finite dominant polynomial mapping of geometric degree
$\mu(f).$ The bifurcation set  of  $f$ is
$$B(f)=\{z\in  Z : z\in Sing(Z) \ or \ \# f^{-1}(z)\not=\mu(f)\}.$$
\end{defi}

\begin{re}
{\rm The same definition makes sense for those continuous mapping
$f: X\to Z$, for which we can define the topological degree
$\mu(f)$ and singularities of $Z.$ In particular if $Z_1, Z_2$ are
affine algebraic varieties, $f: X\to Z_1$ is a dominant polynomial
mapping and  $\Phi : Z_1\to Z_2$ is a homeomorphism which
preserves singularities, then we can define  $B(\Phi\circ f)$ as
$\Phi(B(f)).$ Moreover, the mapping $\Phi\circ f$ behaves
topologically
 as an analytic covering. We will use this facts in the proof
of Theorem \ref{gl1}.}
\end{re}

We have the following  theorem (see also \cite{j-k}):

\begin{theo}\label{jk}
Let $X,Z$ be   affine irreducible complex varieties of the same
dimension  and suppose $X$ is smooth. Let $f: X \to Z$ be a
polynomial dominant mapping. Then the set $B(f)$ is closed.
\end{theo}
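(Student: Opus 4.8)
The plan is to prove that the complement
$Z\setminus B(f)=\{\,z\in Z\setminus Sing(Z):\ \#f^{-1}(z)=\mu(f)\,\}$
is open, which gives the closedness of $B(f)$. Since $Sing(Z)$ is a closed (algebraic) subset contained in $B(f)$, and since the locus $\{z\in Z:\dim f^{-1}(z)\ge 1\}$ is closed by upper semicontinuity of fibre dimension and is also contained in $B(f)$ (each such fibre being infinite, hence $\neq\mu(f)$), it suffices to fix a point $z_0\in Z\setminus B(f)$ and construct a Euclidean neighbourhood of $z_0$ disjoint from $B(f)$. For such a $z_0$ the point is smooth on $Z$ and the fibre $f^{-1}(z_0)=\{x_1,\dots,x_{\mu(f)}\}$ is finite and reduced. (One may then note afterwards that $z\mapsto\#f^{-1}(z)$ is a constructible function, so a Euclidean-closed level condition is in fact Zariski closed.)

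The first and decisive step is to show that $f$ is \emph{finite} at $z_0$, i.e. $z_0\notin S_f$; I expect this to be the main obstacle. The natural tool is the author's theory of the set of non-properness (cf. \cite{jel},\cite{jel1},\cite{j-k}): embed $X$ as a dense open subvariety of a variety $\widetilde X$ carrying a \emph{proper} extension $\widetilde f:\widetilde X\to \overline Z$ of $f$, where $\overline Z\supseteq Z$ is a projective closure and $\deg\widetilde f=\mu(f)$. The description of $S_f$ as the trace on $Z$ of the image $\widetilde f(\widetilde X\setminus X)$ of the points at infinity is what links properness to fibre counting. Concretely, since $\widetilde f$ is proper and $z_0$ is a smooth (hence normal) point, a conservation-of-number argument bounds the total multiplicity carried by the finite part of the fibre: $\sum_{x\in f^{-1}(z_0)}\mathrm{mult}_x(f)\le\mu(f)$, with equality only if $\widetilde f^{-1}(z_0)$ meets $\widetilde X\setminus X$ in nothing. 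If $z_0\in S_f$, then $\widetilde f^{-1}(z_0)$ does meet the points at infinity, and that contribution carries strictly positive multiplicity, forcing $\#f^{-1}(z_0)\le\sum_x\mathrm{mult}_x(f)<\mu(f)$, contradicting $z_0\notin B(f)$. The delicate point, and the technical heart of the proof, is to make the conservation-of-number bookkeeping rigorous when the fibre at infinity may have positive-dimensional components; this is exactly where I would lean on \cite{j-k}.

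Once $f$ is known to be finite at $z_0$, the conclusion follows by a clean local argument. Choose a neighbourhood $U\subseteq Z\setminus Sing(Z)$ of $z_0$ over which $f$ is proper; being proper with finite fibres, $f:f^{-1}(U)\to U$ is finite. Here $f^{-1}(U)$ and $U$ are smooth of the same pure dimension $n$, so by miracle flatness $f$ is flat over $U$, and hence the fibre length $\sum_{x\in f^{-1}(z)}\mathrm{mult}_x(f)$ is constant equal to $\mu(f)$ for all $z\in U$. At $z_0$ the number of points already equals $\mu(f)$, so every local multiplicity is $1$; thus $df$ is an isomorphism at each $x_i$ and $f$ is a local biholomorphism there. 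Shrinking $U$, every $z\in U$ then has exactly $\mu(f)$ distinct, smooth preimages, so $U\cap B(f)=\varnothing$. Therefore $z_0$ is interior to $Z\setminus B(f)$, and $B(f)$ is closed.
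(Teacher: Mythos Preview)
Your overall strategy---show the complement of $B(f)$ is open by proving that any $z_0$ with $\#f^{-1}(z_0)=\mu(f)$ lies outside $S_f$, and then use miracle flatness to produce an \'etale neighbourhood---is sound, but it is organized differently from the paper. The paper instead exhibits $B(f)$ explicitly as the union $K_0(f|_{X_0})\cup S_f\cup Sing(Z)$, where $X_0=X\setminus f^{-1}(S_f\cup Sing(Z))$ and $K_0$ is the set of critical values of the restriction; since $f$ is proper over $Z\setminus S_f$, this last set is closed in $Z\setminus(S_f\cup Sing(Z))$, and the union is closed. The substantive step in both arguments is the inclusion $S_f\subset B(f)$. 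For this the paper uses the Zariski Main Theorem in Grothendieck's form: over an affine neighbourhood $U$ of a quasi-finite point $z\in S_f\setminus Sing(Z)$ one factors $f$ as an open immersion $V\hookrightarrow\overline V$ into a \emph{normal} variety followed by a genuinely finite map $\overline f:\overline V\to U$; an elementary minimal-polynomial bound (Lemma~\ref{l}) then gives $\#\overline f^{\,-1}(z)\le\mu(f)$, and strictness follows because $z\in\overline f(\overline V\setminus V)$ absorbs at least one root.

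Your compactification-plus-conservation-of-number argument is the same idea in a different dress, but the difficulty you flag is real: once $\widetilde f^{-1}(z_0)$ has positive-dimensional components at infinity, na\"{\i}ve multiplicity bookkeeping no longer applies, and deferring to \cite{j-k} at precisely this point is close to assuming the result. The ZMT route avoids the issue entirely, since restricting first to the quasi-finite locus forces $\overline V$ to be finite and normal over $U$, so Lemma~\ref{l} applies with no dimension worries. If you want to salvage the global picture, normalize $\widetilde X$ and slice by a generic smooth curve through $z_0$ to reduce to degree counting for finite maps of curves; but the ZMT argument is shorter and self-contained. Your second step, replacing the paper's critical-value description by miracle flatness and \'etaleness at the $x_i$, is correct and arguably cleaner than the paper's formulation.
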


\begin{proof}
Let us note that outside the set $S_f\cup Sing(Z)$ the mapping $f$
is a (ramified) analytic covering of degree $\mu(f).$ By  Lemma
\ref{l} below, if $z\not\in  Sing(Z)$ we have $\# f^{-1}(z)\le
\mu(f).$ Moreover, since $f$ is an analytic covering outside
$S_f\cup Sing(Z)$ we see that for $y \not\in S_f\cup Sing(Z)$ the
fiber $f^{-1}(z)$  has exactly $\mu(f)$ points counted with
multiplicity. Take $X_0:=X \setminus f^{-1}(Sing(Z)\cup S_f).$ If
$z\in K_0(f_{|X_0})$, {the set of critical values of $f_{|X_0},$}
then $\# f^{-1}(z)<\mu(f).$

Now let $z\in S_f\setminus Sing(Z).$ There are {two}
possibilities:

a)  $\# f^{-1}(z)=\infty.$

b)  $\# f^{-1}(z)<\infty.$

In  case b) let $U$ be an affine neighborhood of $z$ disjoint from
$Sing(z)$ over which the mapping $f$ is quasi-finite. Let
$V=f^{-1}(U).$ By the Zariski Main Theorem in the version given by
Grothendieck, there exists a normal variety $\overline{V}$ and a
finite mapping $\overline{f}:\overline{ V}\to U$ such that

1) $V\subset  \overline{V}$,

2) $\overline{f}_{|V}=f.$

Since  $y\in \overline{f}(\overline{V}\setminus V)$, it follows
from  Lemma \ref{l} below that $\# f^{-1}(z)<\mu(f).$
Consequently, if $z\in S_f,$ we have $\# f^{-1}(z)<\mu(f).$
Finally, we have $B(f)= K_0(f_{|X_0})\cup S_f\cup Sing(Z).$
However, the set $K_0(f_{|X_0})$ is closed in $Z\setminus (S_f\cup
Sing(Z)).$ Hence $B(f)$ is closed in $Z.$
\end{proof}

\begin{lem}\label{l}
Let $X,Z$ be affine normal varieties of the same dimension. Let
$f:X\to Z$ be a finite mapping. Then for every $z\in Z$ we have
$\# f^{-1}(z)\le\mu(f).$
\end{lem}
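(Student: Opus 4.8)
The plan is to pass to coordinate rings and reduce the whole statement to counting roots of one monic polynomial. Write $B=\C[Z]$ and $A=\C[X]$; finiteness of $f$ means exactly that the inclusion $f^{*}:B\hookrightarrow A$ makes $A$ a finitely generated $B$-module, and normality of $Z$ means that $B$ is integrally closed in its field of fractions $\C(Z)$. I may assume $X$ is irreducible (this is the only case used in Theorem~\ref{jk}, where Lemma~\ref{l} is applied to the smooth locus and to the normal variety $\overline{V}$ produced by the Main Theorem), and then, since we work over $\C$ in characteristic zero, the generic fibre consists of exactly $[\C(X):\C(Z)]=:d$ reduced points, so that $\mu(f)=d$.

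Fix $z\in Z$ and let $x_{1},\dots,x_{s}$ be the distinct points of $f^{-1}(z)$; the goal is to prove $s\le d$. First I would separate the fibre by a single function. The points $x_{i}$ correspond to distinct maximal ideals $M_{1},\dots,M_{s}$ of $A$, each with residue field $\C$, so they are pairwise comaximal; the Chinese Remainder Theorem then makes the evaluation map $A\to\prod_{i=1}^{s}A/M_{i}\cong\C^{s}$ surjective. Hence there exists $h\in A$ whose values $h(x_{1}),\dots,h(x_{s})$ are pairwise distinct.

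The key step is to bound $h$ by a monic polynomial over $B$ of degree at most $d$. Since $A$ is integral over $B$, the element $h$ is integral over $B$; its minimal polynomial over $\C(Z)$ has degree $e=[\C(Z)(h):\C(Z)]\le d$, and its coefficients, being symmetric functions of the conjugates of $h$, are integral over $B$. This is precisely where normality is indispensable: because $B$ is integrally closed in $\C(Z)$, those coefficients lie in $B$, so $h$ satisfies a monic relation $p(T)=T^{e}+b_{e-1}T^{e-1}+\cdots+b_{0}$ with all $b_{j}\in B$ and $e\le d$. Evaluating the identity $p(h)=0$ at each $x_{i}$, and using that $b_{j}(x_{i})=b_{j}(z)$ because $b_{j}$ comes from $B$, shows that the $s$ distinct scalars $h(x_{i})$ are all roots of the single polynomial $p_{z}(T)=T^{e}+b_{e-1}(z)T^{e-1}+\cdots+b_{0}(z)\in\C[T]$. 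Therefore $s\le e\le d=\mu(f)$.

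The main obstacle is exactly this passage: a priori a fibre-separating function $h$ could have a minimal polynomial of large degree, or with coefficients only in $\C(Z)$, either of which would wreck the bound. Both issues disappear for structural reasons rather than by computation: the field-degree inequality $e\le d$ holds for every element of $\C(X)$, so no primitivity of $h$ is needed, and integral closure of $B$ forces the coefficients back into $B$. Everything else—comaximality and surjectivity of the evaluation map, and the specialization of $p(h)=0$—is formal, so the normality of the target is the single hypothesis doing the real work; indeed normality of $X$ is not used in this direction.
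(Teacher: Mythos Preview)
Your proof is correct and follows essentially the same route as the paper: separate the fibre by a single regular function $h$, take its minimal polynomial over $\C(Z)$, use normality of $Z$ to force the coefficients into $\C[Z]$, and specialize at $z$ to bound the number of distinct values of $h$ by the degree. The only cosmetic differences are that the paper picks $h$ as a general hyperplane section while you invoke the Chinese Remainder Theorem, and that you spell out explicitly the role of integral closure and the inequality $[\C(Z)(h):\C(Z)]\le[\C(X):\C(Z)]$, which the paper leaves implicit.
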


\begin{proof}
Let $\# f^{-1}(z)=\{ x_1, \ldots , x_r\}.$ We can choose a
function $h\in \C[X]$ which separates all $x_i$ (in particular we
can take as $h$ the equation of a general hyperplane section).
Since $f$ is finite, the minimal equation of $h$ over the field
$\C(Z)$ is of the form: $$T^s+a_1(f)T^{{s-1}}+\ldots+a_s(f)\in
f^*\C[Z][T],$$ where $ s\le \mu(f).$ If we substitute $f=z$ into
this equation we get {the} desired result.
\end{proof}

\section{Main result}
We start with the following:

\begin{lem}\label{ber}
Let $f: X^k\to Y^l$ be a dominant polynomial mapping of affine
irreducible varieties.  There exists a Zariski open non-empty
subset  $U\subset Y$ such that for $y\in U$ we have
$Sing(f^{-1}(y))=f^{-1}(y)\cap Sing(X).$
\end{lem}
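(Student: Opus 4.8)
The statement is an equality of two closed subsets of a generic fibre, so the plan is to prove the two inclusions separately, after first shrinking $Y$ to a Zariski open dense subset $U$ on which several generic good properties hold simultaneously. Concretely, I would choose $U$ so that for every $y\in U$: the target $Y$ is smooth at $y$; the fibre $W:=f^{-1}(y)$ is reduced and of pure dimension $k-l$ (available from the theorem on dimensions of fibres together with generic reducedness in characteristic $0$); and the restriction $f|_{X_{reg}}$ is smooth over $y$. The last property is the generic smoothness (Sard) theorem applied to the dominant morphism $f|_{X_{reg}}:X_{reg}\to Y$ from the smooth variety $X_{reg}=X\setminus Sing(X)$. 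Each of these is Zariski open and dense, so their common refinement $U$ is again open and dense.

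The inclusion $Sing(f^{-1}(y))\subseteq f^{-1}(y)\cap Sing(X)$ is the easy half. For $y\in U$ the fibre of $f|_{X_{reg}}$ over $y$ is exactly $W\cap X_{reg}$ and is smooth; since $X_{reg}$ is open in $X$, the set $W\cap X_{reg}$ is open in $W$, and smoothness is a local property, so every point of $W\cap X_{reg}$ is a smooth point of $W$. Hence any singular point of $W$ must lie in $W\setminus X_{reg}=W\cap Sing(X)$.

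The reverse inclusion $f^{-1}(y)\cap Sing(X)\subseteq Sing(f^{-1}(y))$ is the heart of the argument, and I would obtain it by a Zariski tangent space dimension count. Fix $x\in W\cap Sing(X)$. Because $Y$ is smooth at $y$, the exact sequence of Kähler differentials $f^*\Omega_Y\to\Omega_X\to\Omega_{X/Y}\to 0$, restricted to $x$ and dualised, gives a left exact sequence $0\to \hat T_x W\to \hat T_x X\xrightarrow{df_x} T_yY$, whence $\dim \hat T_x W\ge \dim \hat T_x X-\rank(df_x)\ge \dim\hat T_x X-l$. Since $x\in Sing(X)$ and $X$ is irreducible of dimension $k$, its Zariski tangent space satisfies $\dim\hat T_x X\ge k+1$, so $\dim\hat T_x W\ge k+1-l$. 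On the other hand $\dim_x W=k-l$ because $y\in U$. As $k+1-l>k-l=\dim_x W$, the fibre $W$ is not smooth at $x$; being reduced, it is therefore singular at $x$, i.e. $x\in Sing(f^{-1}(y))$.

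Combining the two inclusions yields the equality for every $y\in U$. The step I expect to require the most care is the bookkeeping behind the choice of $U$: I must simultaneously guarantee that the scheme theoretic fibre is reduced and equidimensional of the expected dimension $k-l$, so that the tangent space computation performed on the scheme matches the assertion about the reduced variety $f^{-1}(y)$, and that $\dim T_yY=l$, so that the bound $\rank(df_x)\le l$ is legitimate. These are all standard genericity statements, but assembling them into a single open dense $U$ is where the argument must be stated carefully.
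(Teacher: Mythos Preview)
Your proof is correct and follows essentially the same strategy as the paper's: generic smoothness (Sard) on $X_{reg}$ for the inclusion $Sing(f^{-1}(y))\subset f^{-1}(y)\cap Sing(X)$, generic reducedness of fibres in characteristic~$0$, and a tangent/cotangent space dimension count for the reverse inclusion. The only cosmetic difference is that the paper first reduces to $Y=\C^l$ via a generically \'etale projection and then performs the count explicitly on $\mathfrak m/\mathfrak m^2$ using the generators $f_i-y_i$, arguing by contradiction, whereas you obtain the equivalent inequality $\dim\hat T_xW\ge\dim\hat T_xX-l$ directly from the exact sequence of K\"ahler differentials; the two computations are contrapositives of one another.
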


\begin{proof}
We can assume that $Y$ is smooth. Since there exists a mapping
$\pi: Y^l\to \C^l$ which is generically etale, we can assume that
$Y=\C^l.$ Let us recall that if $Z$ is an algebraic variety, then
a point $z\in Z$ is smooth if and only if the local ring ${\cal
O}_z(Z)$ is regular, or equivalently  $\dim_\C {\mathfrak
m}/{\mathfrak m}^2=\dim Z,$ where $\mathfrak m$ denotes the
maximal ideal of ${\cal O}_z(Z)$.

Let $y=(y_1,...,y_l)\in \C^l$ be a sufficiently generic point.
Then by Sard's Theorem the fiber $Z=f^{-1}(y)$ is smooth outside
$Sing(X)$ and $\dim Z=\dim X - l=k-l.$ Note that the generic
(scheme-theoretic) fiber $F$ of $f$ is reduced. Indeed, this fiber
$F=Spec(\C(Y)\otimes_{\C[Y]} \C[X])$ is the spectrum of a
localization of $\C [X]$ and so a domain. Since we are in
characteristic zero, the reduced $\C(Y)$-algebra
$\C(Y)\otimes_{\C[Y]} \C[X]$ is necessarily geometrically reduced
(i.e. stays reduced after extending to an algebraic closure of
$\C(Y)$). Since  the property of fibres being geometrically
reduced is open on the base, i.e. on $Y$, thus the fibres over an
open subset  of $Y$ will be reduced. Consequently, there is a
Zariski open, non-empty subset $U\subset Y$ such that for $y\in U$
the fiber  $f^{-1}(y)$ is reduced. Hence we can assume that $Z$ is
reduced. It is enough to show that every point $z\in Z\cap
Sing(X)$ is singular on $Z.$

Assume that $z\in Z\cap Sing(X)$ is smooth on $Z.$ Let $f: X\to
\C^l$ be given as $f=(f_1,...,f_l)$, where $f_i\in \C[X].$ Then
${\cal O}_z(Z)={\cal O}_z(X)/(f_1-y_1,...,f_l-y_l).$ In particular
if $\mathfrak m'$ denotes the maximal ideal of ${\cal O}_z(Z)$ and
$\mathfrak m$ denotes the maximal ideal of ${\cal O}_z(X)$ then
$\mathfrak m'=\linebreak \mathfrak m/(f_1-y_1,...,f_l-y_l).$ Let
$\alpha_i$ denote the class of the polynomial $f_i-y_i$ in
${\mathfrak m}/{\mathfrak m}^2.$ Let us note that
\begin{equation}\label{eq}
{\mathfrak m'}/{\mathfrak m'}^2={\mathfrak m}/({\mathfrak
m}^2+(\alpha_1,...,\alpha_l)).
\end{equation}
Since the point $z$ is smooth on $Z$ we have $\dim_\C {\mathfrak
m'}/{\mathfrak m'}^2=\dim Z=\dim X-l.$ Take a basis
$\beta_1,...,\beta_{k-l}$ of the space $ {\mathfrak m'}/{\mathfrak
m'}^2$ and let $\overline{\beta_i}\in {\mathfrak m}/{\mathfrak
m}^2$ correspond to $\beta_i$ under the correspondence (\ref{eq}).
Note that the vectors
$\overline{\beta_1},...,\overline{\beta_{k-l}}, \alpha_1,...,
\alpha_l$ generate the space ${\mathfrak m}/{\mathfrak m}^2.$ This
means that $\dim_\C {\mathfrak m}/{\mathfrak m}^2\le k-l+l=k=\dim
X.$ Hence the point $z$ is smooth on $X$, a contradiction.
\end{proof}

 We have:

\begin{lem}\label{fiber}
Let $X, Y$ be  smooth complex irreducible algebraic varieties and
$f:X\to Y$ a regular dominant mapping. Let $N\subset W\subset X$
be closed subvarieties of $X.$ Then there exists a non-empty
Zariski open subset $U\subset Y$ such that for every $y_1, y_2\in
U$ the triples  $(f^{-1}(y_1), W\cap f^{-1}(y_1), N\cap
f^{-1}(y_1))$ and $(f^{-1}(y_2), W\cap f^{-1}(y_2), N\cap
f^{-1}(y_2))$ are homeomorphic.
\end{lem}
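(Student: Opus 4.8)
The plan is to trivialize the family topologically over a Zariski-open dense subset of $Y$ by means of Thom's first isotopy lemma, after reducing to a proper map by compactification and choosing a Whitney stratification compatible with the given subvarieties.

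First I would compactify. Embedding $X$ in a projective variety and taking the closure of the graph of $f$ inside this compactification times $Y$, I obtain a variety $\overline{X}\supset X$ together with a proper regular map $\overline{f}\colon\overline{X}\to Y$ extending $f$; write $D=\overline{X}\setminus X$ for the boundary. Next I choose a Whitney stratification $\{S_\alpha\}$ of $\overline{X}$ that is compatible with the closed sets $D$, $\overline{W}$ and $\overline{N}$, in the sense that each of these is a union of strata; the existence of such a stratification is standard for complex algebraic varieties. Since $N\subset W$ are closed in $X$ we have $\overline{N}\cap X=N$ and $\overline{W}\cap X=W$, so on each fibre the pieces cut out by the strata lying in $X$ recover exactly $W\cap f^{-1}(y)$ and $N\cap f^{-1}(y)$.

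Working in characteristic zero, generic smoothness applies to every stratum: for each $S_\alpha$ dominating $Y$ the restriction $\overline{f}|_{S_\alpha}$ is smooth, hence a submersion onto $Y$, over a Zariski-open dense subset $U_\alpha\subset Y$; for each $S_\alpha$ not dominating $Y$ I delete from $Y$ the proper closed set given by the Zariski closure of $\overline{f}(S_\alpha)$. Intersecting these finitely many dense open sets produces a Zariski-open dense $U\subset Y$ such that, over $U$, every stratum meeting $\overline{f}^{-1}(U)$ maps submersively onto $U$. Together with the Whitney conditions and the properness of $\overline{f}$, this is precisely the hypothesis of Thom's first isotopy lemma, which therefore exhibits $\overline{f}\colon\overline{f}^{-1}(U)\to U$ as a stratified locally trivial fibration.

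Finally, since $U$ is a non-empty Zariski-open subset of the irreducible variety $Y$ it is connected in the Euclidean topology, so local triviality yields, for any $y_1,y_2\in U$, a stratum-preserving homeomorphism $\overline{f}^{-1}(y_1)\to\overline{f}^{-1}(y_2)$ (concatenate the local trivializations along a path joining $y_1$ to $y_2$). This homeomorphism carries $D$, $\overline{W}$ and $\overline{N}$ to themselves, hence restricts to a homeomorphism of the triples $(f^{-1}(y_1),W\cap f^{-1}(y_1),N\cap f^{-1}(y_1))$ and $(f^{-1}(y_2),W\cap f^{-1}(y_2),N\cap f^{-1}(y_2))$, as required. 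The one genuine obstacle is the non-properness of $f$, which would defeat a direct application of the isotopy lemma; it is exactly this that forces the compactification step and the inclusion of the boundary $D$ among the strata, so that deleting $D$ afterwards is compatible with the trivialization.
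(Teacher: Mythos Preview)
Your proof is correct and follows essentially the same route as the paper's: compactify the source so that the extended map is proper, choose a Whitney stratification compatible with the boundary $D$ and with the closures $\overline{W},\overline{N}$, pass to the Zariski-open locus of $Y$ over which every stratum maps submersively, and invoke Thom's first isotopy lemma to obtain a stratum-preserving trivialization that restricts to the desired homeomorphism of triples. The only cosmetic differences are that the paper also compactifies $Y$ and takes a resolution of singularities of the graph closure (and explicitly lists the intersections $\overline{N}\cap Z,\ \overline{W}\cap Z$ among the sets governing the stratification), but none of these extra steps is needed for the argument to go through.
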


\begin{proof}
Let $X_1$ be an algebraic completion of $X$ and let $\overline{Y}$
be a smooth algebraic completion of $Y.$ Take
$X_1':=\overline{graph(f)}\subset X_1\times \overline{Y}$ and let
$X_2$ be a desingularization of $X_1'.$

We can assume that $X\subset X_2.$ We have an induced mapping
$\overline{f}: X_2\to  \overline{Y}$ such that
$\overline{f}_{|X}=f.$ Let $Z=X_2\setminus X.$ Denote by
$\overline{N}, \overline{W}$ the closures of $N$ and $W$ in $X_2.$
Let $\cal R=\{\overline{N}\cap Z, \overline{W}\cap Z,
\overline{N},  \overline{W}, Z\} ,$ a collection of algebraic
subvarieties of $X_2.$ There is a Whitney stratification ${\cal
S}$ of $X_2$ which is compatible with $\cal R.$

For any smooth strata $S_i\in \cal S$ let $B_i$ be the set of
critical values of the mapping {$\overline{f}_{|S_i}$} and denote
$B=\overline{ \bigcup B_i}.$ Take $X_3=X_2\setminus
\overline{f}^{-1}(B).$  The restriction of the stratification
$\cal S$ to $X_3$ gives a Whitney {stratification} which is
compatible with the family ${\cal R}':={\cal R}\cap X_3.$  We have
a proper mapping $f':=\overline{f}_{|X_3} : X_3\to
\overline{Y}\setminus B$ which is a submersion on each stratum. By
the Thom first isotopy theorem there is a trivialization of $f'$
which preserves the strata. It is an easy observation that this
trivialization gives a trivialization of the mapping $f:
X\setminus f^{-1}(B)\to Y\setminus B:=U.$ In particular the fibers
$f^{-1}(y_1)$ and $f^{-1}(y_2)$ are homeomorphic via a stratum
preserving homeomorphism. This means that the triples
$(f^{-1}(y_1), W\cap f^{-1}(y_1), N\cap f^{-1}(y_1))$ and
$(f^{-1}(y_2), W\cap f^{-1}(y_2),  N\cap f^{-1}(y_2))$ are
homeomorphic.
\end{proof}

\vspace{3mm}

We also need the following:

\newpage

\begin{defi}
Let $X,Y$ be smooth affine varieties. By a family of regular
mappings ${\cal F}_M(X,Y,F):={\cal F}$ we mean a regular mapping
$F: M\times X\to Y$, where $M$ is an algebraic variety. The
members of a family $\cal F$ are the mappings $f_m: X\ni x\to
F(m,x)\in Y.$ Let
$$G: M\times X\ni (m,x)\mapsto (m, F(m,x))\in Z=\overline{G(M\times X)}\subset M\times Y.$$
If $G$ is generically finite, then by the topological degree
$\mu({\cal F})$ we mean the number $\mu(G)$. Otherwise we put
$\mu({\cal F})=0.$
\end{defi}

Later we will sometimes  identify the mapping $f_m$ with the
mapping $G(m,\cdot)=(m, f_m): X\to m\times Y.$ The following lemma
is important:

\begin{lem}\label{gl'}
Let $X,Y$ be  smooth affine complex varieties. Let $M$ be a smooth
affine irreducible variety and let $\cal F$ be the family induced
by a mapping $F: M\times X \to Y,$ i.e., $\cal F=\{ f_m: X\ni
x\mapsto F(m,x)\in Y, \ m\in M\}.$ Assume that  $\mu({\cal F})>0.$
Take $Z=\overline{G(M\times X)}$ and put $Z_m= (m\times Y)\cap Z.$
 Then

1)  there is an open non-empty subset $U_1\subset M$ such that for
every $m\in U_1$ we have $\mu(f_m)=\mu({\cal F});$

2) there is a non-empty open subset $U_2\subset U_1$ such that for
every $m\in U_2$ we have $\overline{f_m(X)}=Z_m:=(m\times Y)\cap
Z$ and $B(f_m)=B(G)_m:=(m\times Y)\cap B(G);$

3) there is a non-empty open subset $U_3\subset U_2$ such that for
every $m_1, m_2\in U_3$ the pairs  $(\overline{f_{m_1}(X)},
B(f_{m_1}))$ and $(\overline{f_{m_2}(X)}, B(f_{m_2}))$  are
equivalent by means of a homeomorphism, i.e., there is a
homeomorphism $\Psi : Y\to Y$ such that
$\Psi(\overline{f_{m_1}(X)})=\overline{f_{m_2}(X)}$ and
$\Psi(B(f_{m_1}))=B(f_{m_2}).$
\end{lem}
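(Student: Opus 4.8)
The plan is to prove the three parts in sequence, each building on the previous, using genericity statements that hold on Zariski-open dense subsets of $M$. The overall strategy exploits that $G : M\times X \to Z$ is a single generically finite mapping, so that properties of the restricted maps $f_m$ can be read off from the generic behavior of $G$ by slicing $Z$ along $m\times Y$.

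For part 1), I would argue as follows. Since $\mu(G) = \mu(\mathcal F) > 0$, the map $G$ is dominant and generically finite of degree $\mu(\mathcal F)$. Over a Zariski-open dense $V \subset Z$ the fiber $G^{-1}(z)$ has exactly $\mu(\mathcal F)$ points. The projection $\pi : Z \to M$ is dominant (because $G$ is), so a generic $m \in M$ meets $V$ in a dense open subset of $Z_m$. For such $m$ the restricted map $G(m,\cdot) : X \to Z_m$ is generically finite of degree $\mu(\mathcal F)$, and since $f_m$ is identified with $G(m,\cdot)$ we get $\mu(f_m) = \mu(\mathcal F)$. This defines $U_1$. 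One must also use that $\dim Z_m = \dim Z - \dim M$ generically so that $Z_m$ has the right dimension for the degree count to make sense.

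For part 2), the equality $\overline{f_m(X)} = Z_m$ holds generically because $G(M\times X)$ is dense in $Z$, hence the image $f_m(X)$ is dense in $Z_m$ for generic $m$ (a standard fiber-dimension / Chevalley argument on the constructible image). The more delicate equality $B(f_m) = B(G)_m$ requires matching the two bifurcation sets. Here I would combine three ingredients applied to $G$ and its fibers: the set of non-properness $S_G$ restricts correctly (using that non-properness of $G$ along $m\times Y$ coincides generically with non-properness of $f_m$), the singular locus $\mathrm{Sing}(Z)$ slices to $\mathrm{Sing}(Z_m)$ for generic $m$ (this is exactly Lemma~\ref{ber} applied to $\pi : Z \to M$), and the locus where fiber cardinality drops below $\mu$ behaves compatibly under slicing because $G$ is an analytic covering of degree $\mu(\mathcal F)$ off $S_G \cup \mathrm{Sing}(Z)$ by Theorem~\ref{jk}. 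Shrinking $M$ finitely many times to enforce all three compatibilities simultaneously yields $U_2$.

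For part 3), I would apply Lemma~\ref{fiber} to transport the generic fiber structure homeomorphically. Consider the mapping $\pi : Z \to M$ (or more precisely a smooth model of it after desingularization as in the proof of Lemma~\ref{fiber}) together with the closed subvarieties $B(G) \subset Z$ and, if needed, an intermediate stratifying variety; Lemma~\ref{fiber} then produces a Zariski-open $U_3 \subset U_2$ over which the pairs $(Z_{m_1}, B(G)_{m_1})$ and $(Z_{m_2}, B(G)_{m_2})$ are homeomorphic by a stratum-preserving homeomorphism. By part 2) these pairs equal $(\overline{f_{m_1}(X)}, B(f_{m_1}))$ and $(\overline{f_{m_2}(X)}, B(f_{m_2}))$, and the homeomorphism furnished by the isotopy theorem is a homeomorphism of the fiber; extending or realizing it as a self-homeomorphism $\Psi : Y \to Y$ of the ambient $Y$ gives the desired $\Psi$. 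I expect the main obstacle to be part 2), specifically proving $B(f_m) = B(G)_m$: one must verify that each of the three constituent loci (non-properness, singularities of the image, and the drop locus of fiber cardinality) is compatible with slicing by $m\times Y$ for generic $m$, and that no new non-properness or singular behavior is created or destroyed upon restriction. The singularity part is handled cleanly by Lemma~\ref{ber}, but controlling $S_{f_m}$ versus $S_G \cap (m\times Y)$ requires care, since properness is not obviously preserved under arbitrary slicing and may demand an additional generic-flatness or constructibility argument.
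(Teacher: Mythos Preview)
Your overall plan matches the paper's, but there are two places where the paper's argument is sharper than yours, and in one of them your approach has a real gap.

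\textbf{Part 2.} You are overcomplicating the identity $B(f_m)=B(G)_m$ by trying to decompose $B(G)$ into the non-properness locus, the singular locus, and the drop locus, and then matching each piece separately. The paper avoids this entirely by working directly from the definition of the bifurcation set. The point is that $G^{-1}(m,y)=\{m\}\times f_m^{-1}(y)$, so $\#G^{-1}(m,y)=\#f_m^{-1}(y)$ tautologically. Once you have arranged (i) $\mu(f_m)=\mu(G)$ from part 1), (ii) $Z_m=\overline{f_m(X)}$ from a dimension count (the paper writes $Z_m=\overline{f_m(X)}\cup B(G)_m$ and notes that $B(G)_m$ has generically smaller dimension), and (iii) $\mathrm{Sing}(Z_m)=\mathrm{Sing}(Z)\cap(m\times Y)$ from Lemma~\ref{ber}, the equality $B(f_m)=B(G)_m$ is immediate from the definition. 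Your worry about comparing $S_{f_m}$ with $S_G\cap(m\times Y)$ is therefore a red herring: the non-properness set never needs to be isolated.

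\textbf{Part 3.} Here you propose to apply Lemma~\ref{fiber} to $\pi:Z\to M$ (after desingularizing). That only yields a homeomorphism $Z_{m_1}\to Z_{m_2}$ carrying $B(G)_{m_1}$ to $B(G)_{m_2}$, and you then say one must ``extend or realize it as a self-homeomorphism $\Psi:Y\to Y$.'' This extension step is exactly the nontrivial part, and you give no mechanism for it. The paper's trick is to apply Lemma~\ref{fiber} not to $Z\to M$ but to the projection $U_2\times Y\to U_2$, with $W=(U_2\times Y)\cap Z$ and $N=(U_2\times Y)\cap B(G)$ as the nested closed subvarieties. The total space $U_2\times Y$ is smooth by hypothesis, so Lemma~\ref{fiber} applies directly and the fibers are copies of $Y$; the stratum-preserving homeomorphism it produces is then automatically a self-homeomorphism $\Psi:Y\to Y$ carrying $(Z_{m_1},B(G)_{m_1})$ to $(Z_{m_2},B(G)_{m_2})$. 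This is the step you are missing.
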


\begin{proof}
1) Take $G: M\times X\ni (m,x)\mapsto (m,f(m,x))\in Z.$ We know by
Theorem \ref{jk} that the mapping $G': M\times X \ni (m,x)\mapsto
(m, F(m,x))\in Z$ has a constant number of points in the fibers
outside the bifurcation set $B(G)\subset Z.$ Take $U=Z\setminus
B(G).$ Let $\pi: Z \ni (m, y)\mapsto m\in M$ be the projection. We
show that the constructible set $\pi(U)$ is dense in $M.$ Indeed,
assume that $\overline{\pi(U)}=N$ is a proper subset of $M.$ Since
$U$ is dense in $Z$, we have $\pi(Z)\subset N$, i.e., $Z\subset
N\times Y.$ This is a contradiction. In particular the set
$\pi(U)$ is dense in $M$ and it contains a Zariski open, non-empty
subset $U_1\subset M.$ Of course $\mu(f_m)=\mu(\cal F)$ for $m\in
U_1.$

2) Consider the projection $\pi : Z\ni (m,y)\mapsto m\in M.$ As we
know from 1), the mapping $\pi$ is dominant. By a well known
result, after shrinking $U_1$ we can assume that every fiber $Z_m$
of $\pi$ ($m\in U_2\subset U_1$) is of pure dimension $d={\rm
dim}\ Z- {\rm dim}\ M={\rm dim}\ X.$ However, $Z_m
=\overline{f_m(X)}\cup B(G)_m.$ Generically the dimension of
$B(G)_m$ is less than $d.$ Hence if we possibly shrink $U_2$, we
get $Z_m=\overline{f_m(X)}$ for $m\in U_2.$ Moreover, by Lemma
\ref{ber} (after shrinking $U_2$ if necessary), we can assume that
$Sing(Z_m)=Sing(Z)_m:=(m\times Y)\cap Sing(Z).$ Now it is easy to
see that $B(f_m)=B(G)_m.$

3) We have $\overline{f_m(X)}=Z_m$ and $B(f_m)= B(G)_m$ for $m\in
U_2$. Now apply Lemma \ref{fiber} with $X=U_2\times Y,$
$W=(U_2\times Y)\cap Z$, $N=(U_2\times Y)\cap B(G)$ and $f:
U_1\times Y\ni (m,y)\mapsto m\in U_1.$
\end{proof}

Now we are ready to prove our main result:

\begin{theo}\label{gl1}
Let $X,Y$ be smooth affine irreducible varieties. Every algebraic
family $\cal F$ of polynomial mappings from $X$ to $Y$ contains
only a finite number of topologically non-equivalent proper
mappings.
\end{theo}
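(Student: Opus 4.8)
The plan is to argue by induction on $\dim M$, where $M$ is the parameter variety of the family ${\cal F}$ induced by $F:M\times X\to Y$. First I would carry out the reduction announced in the introduction: an arbitrary variety $M$ decomposes into finitely many locally closed, smooth, irreducible affine subvarieties, and restricting $F$ to each piece yields a family of exactly the type treated in Lemma \ref{gl'}. Since a finite union of finite sets of topological types is finite, it suffices to prove the theorem when $M$ is smooth, irreducible and affine. The base case $\dim M=0$ is trivial, as the family then consists of finitely many mappings. For the inductive step I would split according to the value of the topological degree $\mu({\cal F})$.

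Consider first the degenerate case $\mu({\cal F})=0$, i.e.\ the mapping $G(m,x)=(m,F(m,x))$ is not generically finite, so for generic $m$ the fibre $f_m^{-1}(y)$ is positive-dimensional and $\dim\overline{f_m(X)}<\dim X$. On the other hand, a proper polynomial mapping $f_m:X\to Y$ from an affine variety has compact, hence finite, fibres, so $\dim\overline{f_m(X)}=\dim X$ whenever $f_m$ is proper. Therefore every proper member of ${\cal F}$ is parametrised by a point $m$ of the proper constructible locus $\{m:\dim\overline{f_m(X)}=\dim X\}$, whose closure $M'$ satisfies $\dim M'<\dim M$; restricting ${\cal F}$ to $M'$ and invoking the inductive hypothesis settles this case.

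Now suppose $\mu({\cal F})>0$. Here I would apply Lemma \ref{gl'} to obtain a Zariski dense open $U:=U_3\subset M$ on which $\mu(f_m)=\mu({\cal F})$, on which $\overline{f_m(X)}=Z_m$ and $B(f_m)=B(G)_m$, and on which all the pairs $(\overline{f_m(X)},B(f_m))$ are carried onto one another by homeomorphisms of $Y$. Fix $m\in U$ with $f_m$ proper. Over $Z_m\setminus B(f_m)$ the proper mapping $f_m$ restricts to a finite covering of degree $\mu({\cal F})$; since $X$ is irreducible, $X\setminus f_m^{-1}(B(f_m))$ is connected, so this covering is connected and hence classified by a single conjugacy class of subgroups of index $\mu({\cal F})$ in $G:=\pi_1(Z_m\setminus B(f_m))$. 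As $Z_m\setminus B(f_m)$ is a complex algebraic variety, $G$ is finitely generated, and a finitely generated group has only finitely many subgroups of any fixed finite index. Since the third part of Lemma \ref{gl'} identifies all these groups through homeomorphisms of $Y$, only finitely many conjugacy classes can arise among the proper $f_m$, $m\in U$.

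The \emph{crux} is to show that this discrete covering datum, together with the homeomorphism type of the pair $(\overline{f_m(X)},B(f_m))$, determines the topological type of $f_m:X\to Y$. Given two proper members $f_{m_1},f_{m_2}$ with $m_i\in U$ yielding the same conjugacy class, one transports $(Z_{m_1},B(f_{m_1}))$ onto $(Z_{m_2},B(f_{m_2}))$ by the homeomorphism $\Psi$ of $Y$ from Lemma \ref{gl'}, and matching conjugacy classes then furnish a homeomorphism $\Phi_0$ between the unramified covers $X\setminus f_{m_i}^{-1}(B(f_{m_i}))$ commuting with $\Psi$ and the restricted maps. The main obstacle is to extend $\Phi_0$ across the ramification locus to a homeomorphism $\Phi:X\to X$ realising $\Psi\circ f_{m_1}\circ\Phi^{-1}=f_{m_2}$; this is exactly where properness is essential, since a proper finite mapping onto a normal variety is recovered near its branch set by canonically filling in the ends of its unramified part, the analytic-covering behaviour recorded in the remark preceding Theorem \ref{jk}. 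Granting this extension, the proper members of ${\cal F}_{|U}$ fall into finitely many topological types; the complement $M\setminus U$ is a proper closed subvariety, so after decomposing it into smooth irreducible affine pieces the inductive hypothesis yields finitely many further types, and adding the two finite counts completes the proof.
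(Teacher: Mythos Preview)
Your proposal is correct and follows essentially the same route as the paper: induction on $\dim M$, reduction to smooth irreducible affine $M$, Lemma \ref{gl'}, classification of the unramified coverings by index-$\mu({\cal F})$ subgroups of $G=\pi_1(Q\setminus B)$ via Hall's finiteness, and extension of the covering isomorphism across the branch locus using properness. The only noteworthy differences are that the paper disposes of the case $\mu({\cal F})=0$ in one line (no $f_m$ can be proper, since a single proper member would make the fibre dimension of $G$ vanish on $\{m\}\times X$ and hence generically by upper semicontinuity), and that the paper supplies the explicit, purely topological extension argument---shrinking connected neighbourhoods mapped into disjoint neighbourhoods of the finite fibre $f_i^{-1}(y)$---where you only sketch the idea and allude to normality of the target, which is not actually used.
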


\begin{proof}
The proof is by induction on $\dim M.$ We can assume that $M$ is
affine, irreducible and smooth. Indeed, $M$ can be covered by a
finite number of affine subsets $M_i$, and we can consider the
families ${\cal F}_{|M_i}$ separately. For the same reason we can
assume that $M$ is irreducible. Finally dim $M\setminus Reg(M)<$
dim $M$ and we can use induction to reduce the general case to the
smooth one.

Assume that $M$ is smooth and affine. If $\mu({\cal F})=0$, then
$\cal F$ does not contain any proper mapping. Hence we can assume
that $\mu({\cal F})=k>0.$ By Lemma \ref{gl'}  there is a non-empty
open subset $U\subset M$ such that for every $m_1, m_2\in U$ we
have

1) $\mu(f_{m_1})=\mu(f_{m_2})=k,$

2) the pairs $(\overline{f_{m_1}(X)}, B(f_{m_1}))$ and
$(\overline{f_{m_2}(X)}, B(f_{m_2}))$  are equivalent by means of
a homeomorphism, i.e., there is a homeomorphism $\Psi : Y\to Y$
such that $\Psi(\overline{f_{m_1}(X)})=\overline{f_{m_2}(X)}$ and
$\Psi(B(f_{m_1}))=B(f_{m_2}).$

 Fix a pair $Q=\overline{f_{m_0}(X)}, B=B(f_{m_0})$ for some $m_0\in U_3.$ For $m\in
U_3$ the   mapping $f_m: X\to Y$ is topologically equivalent to
the continuous mapping $f'_m=\Psi_m\circ f_m$ with
$\overline{f'_{m}(X)}=Q$ and $B(f'_m)=B$ (Lemma \ref{gl'}). Every
mapping $f'_m$  induces a topological covering $f'_m :X\setminus
{f'_m}^{-1}(B)=P_{f'_m}\to R=Q\setminus B$. Take a point $a\in R$
and let $a_{f'_m}\in {f'_m}^{-1}(a).$ We have an induced
homomorphism
$$f_* :\pi_1(P_{f'_m}, a_{f'_m})\to \pi_1(R,a).$$
Denote $H_f=f_*(\pi_1(P_f, a_f))$ and $G=\pi_1(R,a).$ Hence
$[G:H_f]=k.$ It is well known that the fundamental group of a
smooth algebraic variety  is finitely generated. In particular the
group $G:=\pi_1(Q\setminus B, a)$ is finitely generated. Let us
recall the following result of M. Hall (see  \cite{hal}):

\begin{lem}\label{hall}
Let $G$ be a finitely generated group and let $k$ be a natural
number. Then there are only a finite number of subgroups $H\subset
G$ such that $[G:H]=k.$
\end{lem}

By Lemma \ref{hall}  there are only a finite number of subgroups
$H_1,..., H_r\subset G$ with index $k.$ Choose proper  mappings
$f_i=f'_{m_i}=\Psi_i\circ f_{m_i} : X\to Y$ such that
$H_{f_i}=H_i$ (of course only if such a mapping $f_i$  does
exist). We show that every proper  mapping $f'_m$ ($m\in U$) is
equivalent to one of mappings $f_i.$

Indeed, let $H_{f'_m}=H_{f_i}$ (here $f'_m=\Psi_m\circ f_m$). We
show that $f'_m:=f$ is equivalent to $f_i.$ Let us consider two
coverings $f: (P_f, a_f)\to (R,a)$ and $f_i: (P_{f_i}, a_{f_i})\to
(R,a).$ Since $f_*(\pi_1(P_f, a_f))={f_i}_*(\pi_1(P_{f_i},
a_{f_i}))$ we can lift the covering $f$ to a homeomorphism $\phi:
P_f\to P_{f_i}$ such that following diagram commutes:
\begin{center}
\begin{picture}(240,120)(-40,40)
\put(180,160){\makebox(0,0)[tl]{$(P_{f_i}, a_{f_i})$}}
\put(20,40){\makebox(0,0)[tl]{$(P_f,a_f)$}}
\put(180,40){\makebox(0,0)[tl]{$(R,a)$}}
\put(190,100){\makebox(0,0)[tl]{$f_i$}}
\put(95,50){\makebox(0,0)[tl]{$f$}}
\put(80,100){\makebox(0,0)[tl]{$\phi$}}
\put(65,35){\vector(1,0){110}} \put(40,45){\vector(4,3){130}}
\put(183,145){\vector(0,-1){100}}
\end{picture}
\end{center}
\vspace{15mm}

\noindent Since the mappings $f$ and $f_i$ are proper, the mapping
$\phi$ can be extended to a continuous mapping $\Phi$ on the whole
of $X.$ Indeed, take a point $x\in f^{-1}(B)$ and let $y=f(x).$
The set $f_i^{-1}(y)=\{ b_1,..., b_s\}$ is finite. Take small open
disjoint neighborhoods $W_i(r)$ of  $b_i$, such that  $W_i(r)$
shrinks to $b_i$ as $r$ tends to $0.$ We can choose an
 open neighborhood $V(r)$ of $y$ so small that
$f_i^{-1}(V(r))\subset \bigcup^s_{j=1} W_i(r).$  Now take a small
connected neighborhood $P_x(r)$ of $x$ such that $f(P_x(r))\subset
V(r).$ The set $P_x(r)\setminus f^{-1}(B)$ is still connected and
it is transformed by $\phi$ into one particular set $W_{i_0}(r).$
We  take $\Phi(x)=b_{i_0}.$ It is easy to see that the mapping
$\Phi$ so defined is a continuous extension of $\phi.$ In fact
$\phi(P_x(r)\setminus f^{-1}(B))$ shrinks to $b_{i_0}$ if $r$ goes
to $0.$ Moreover, we still have $f=f_i\circ \Phi.$

In a similar way the mapping $\Lambda$ determined by $\phi^{-1}$
is  continuous. It is easy to see that $\Lambda\circ\Phi=\Phi\circ
\Lambda=identity$, hence $\Phi$ is a homeomorphism. Consequently,
the mapping $f_i\circ \Phi=\Psi_i\circ f_{m_i}\circ \Phi$ is equal
to $f=\Psi_m\circ f_{m}.$ Finally,  we get
$$(\Psi_i)^{-1}\circ\Psi_m\circ f_m\circ \Phi^{-1}=f_{m_i}.$$
This means that the family ${\cal F}_{|U}$ contains only a finite
number of topologically non-equivalent proper mappings. In fact,
the number of topological types of proper mappings in ${\cal
F}_{|U}$ is bounded by the number of subgroups of $G$ of index
$\mu({\cal F}).$

Let $T=M\setminus U.$ Hence dim $T<$ dim $M.$ By the  induction
the family ${\cal F}_{|T}$ also contains only a finite number of
topologically non-equivalent proper mappings. Consequently so does
${\cal F}.$
\end{proof}

\begin{co}
There is only a finite number of topologically non-equivalent
proper polynomial mappings $f: \C^n\to\C^m$ of bounded (algebraic)
degree.  $\square$
\end{co}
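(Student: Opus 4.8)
The plan is to exhibit the collection $P(n,m,k)$ of all polynomial mappings $f:\C^n\to\C^m$ of degree at most $k$ as a single algebraic family in the precise sense used in Theorem \ref{gl1}, and then to invoke that theorem directly with $X=\C^n$ and $Y=\C^m$.

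First I would pin down the parameter space. A mapping $f=(f_1,\dots,f_m):\C^n\to\C^m$ all of whose components have degree $\le k$ is completely determined by the coefficients of the monomials $x^\alpha$ with $|\alpha|\le k$ occurring in each $f_i$. There are exactly $\binom{n+k}{n}$ such monomials, so the full array of coefficients $c=(c_{i,\alpha})$ ranges over $M:=\C^N$ with $N=m\binom{n+k}{n}$. This $M$ is a smooth, irreducible, affine variety, which is precisely what Theorem \ref{gl1} requires of a base. Next I would write down the universal evaluation mapping
\[
F:M\times\C^n\to\C^m,\qquad F(c,x)=\Bigl(\textstyle\sum_{|\alpha|\le k}c_{1,\alpha}x^\alpha,\ \dots,\ \sum_{|\alpha|\le k}c_{m,\alpha}x^\alpha\Bigr).
\]
Each coordinate of $F$ is a polynomial in the entries of $c$ and in $x$, so $F$ is regular; hence $\cal F=\{f_c:c\in M\}$, with $f_c(x)=F(c,x)$, is an algebraic family of polynomial mappings from $X=\C^n$ to $Y=\C^m$ in the sense of the definition preceding Lemma \ref{gl'}. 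By construction every element of $P(n,m,k)$ equals $f_c$ for some $c\in M$.

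Finally I would apply Theorem \ref{gl1} to this $X$, $Y$, and $\cal F$: since $X$ and $Y$ are smooth affine irreducible, the theorem yields that $\cal F$ contains only finitely many topologically non-equivalent proper mappings. As every proper mapping in $P(n,m,k)$ is a member of $\cal F$, there are only finitely many topological types of proper polynomial mappings $\C^n\to\C^m$ of degree bounded by $k$, which is the assertion. The only point demanding any care — and it is entirely elementary — is the parametrization step: verifying that bounding the algebraic degree genuinely produces a finite-dimensional affine base $\C^N$ and that the evaluation map $F$ is regular, so that Theorem \ref{gl1} applies verbatim. There is no substantive obstacle here; the corollary is an immediate specialization of the theorem.
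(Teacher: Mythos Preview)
Your proposal is correct and is exactly the argument the paper intends: the corollary is marked with $\square$ and given no proof precisely because it is the immediate specialization you describe, namely parametrizing $P(n,m,k)$ by the affine coefficient space $M=\C^N$ and applying Theorem \ref{gl1} with $X=\C^n$, $Y=\C^m$.
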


\section{Families of proper mappings}

In this section we extend our previous result a little in the case
of families of proper mappings. First we prove a following lemma:

\begin{lem}\label{lemat}
Let $Y=\R^{n}$ and let $Z\subset X$ be a linear subspace of $Y.$
 Fix
$\eta>0$ and let $B( 0, \eta)$ be a ball of radius $\eta.$  Let
$\gamma : I\ni t\mapsto \gamma(t)\in  B( 0, \eta)\cap Y$ be a
smooth curve. Take $\epsilon>\eta.$ Then there exists a continuous
family of diffeomorphisms $\Phi_t : X\to X$, $t\in [0,1]$ such
that

1) $\Phi_1(\gamma(t))=\gamma(0) \ {\rm {\it and}}\ \ \Phi_t(z)=z \
\ {\rm {\it for}}\ \ \|z\| \ge \epsilon.$

2) $\Phi_0={\rm identity}.$

3) $\Phi_t(Z)=Z.$
\end{lem}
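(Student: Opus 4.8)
The plan is to realize $\{\Phi_t\}$ as the (time-reversed) flow of a compactly supported, time-dependent vector field that is tangent to $Z$ and that drags the base point $\gamma(0)$ along the curve. I read condition (1) as the tracking identity $\Phi_t(\gamma(t))=\gamma(0)$ for every $t$ (the printed ``$\Phi_1$'' being a misprint for ``$\Phi_t$''). Notice that this already forces $\gamma$ never to cross $Z$: since $\Phi_t$ preserves both $Z$ and its complement and $\Phi_t(\gamma(t))=\gamma(0)$, the point $\gamma(t)$ lies in $Z$ precisely when $\gamma(0)$ does. I will therefore work in the relevant situation, where $\gamma$ lies in $Z$ (or, more generally, stays off $Z$); the construction is cleanest in the first case. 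Throughout I take the ambient space to be $X=\R^n$ (matching $\Phi_t\colon X\to X$).

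First I build a smooth time-dependent field $W_t$ on $X=\R^n$ with $W_t(\gamma(t))=\gamma'(t)$, tangent to $Z$, and supported in $B(0,\epsilon)$. Fix a bump function $\rho\colon\R^n\to[0,1]$ with $\rho\equiv 1$ on $\overline{B(0,\eta)}$ and $\mathrm{supp}\,\rho\subset B(0,\epsilon)$ (possible since $\epsilon>\eta$). Decompose $\R^n=Z\oplus Z^\perp$, write $\gamma(t)=\alpha(t)+\beta(t)$ with $\alpha(t)\in Z$ and $\beta(t)\in Z^\perp$, and set
\[
W_t(x)=\rho(x)\,\alpha'(t)+\rho(x)\,\ell_t(\pi_{Z^\perp}x),
\]
where $\pi_{Z^\perp}$ is the orthogonal projection and $\ell_t\colon Z^\perp\to Z^\perp$ is linear with $\ell_t(\beta(t))=\beta'(t)$ and $\ell_t(0)=0$ (for instance $\ell_t(w)=\langle w,\beta(t)\rangle\,\beta'(t)/\|\beta(t)\|^2$ when $\beta(t)\neq 0$). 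The first summand points along $Z$ and the second vanishes on $Z$, so $W_t$ is tangent to $Z$; since $\rho(\gamma(t))=1$ one checks $W_t(\gamma(t))=\alpha'(t)+\beta'(t)=\gamma'(t)$; and $W_t$ is supported in $B(0,\epsilon)$. When $\gamma\subset Z$ we have $\beta\equiv 0$, the second summand disappears, and one is left with the immediate choice $W_t(x)=\rho(x)\gamma'(t)$.

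Next I integrate the non-autonomous ODE $\frac{d}{dt}\Psi_t(x)=W_t(\Psi_t(x))$, $\Psi_0=\mathrm{id}$. Compact support guarantees completeness, so $\Psi_t$ is a well-defined diffeomorphism for every $t\in[0,1]$, equal to the identity wherever $\rho=0$, hence outside $B(0,\epsilon)$; tangency of $W_t$ to the subspace $Z$ gives $\Psi_t(Z)=Z$; and standard dependence theory for ODEs yields smoothness of $(t,x)\mapsto\Psi_t(x)$. Because $t\mapsto\gamma(t)$ and $t\mapsto\Psi_t(\gamma(0))$ solve the same equation $\dot x=W_t(x)$ with the same initial value $\gamma(0)$, uniqueness gives $\Psi_t(\gamma(0))=\gamma(t)$. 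Setting $\Phi_t:=\Psi_t^{-1}$ then delivers all three conclusions at once: $\Phi_0=\mathrm{id}$; $\Phi_t(z)=z$ for $\|z\|\ge\epsilon$ and $\Phi_t(\gamma(t))=\gamma(0)$; and $\Phi_t(Z)=Z$.

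The main obstacle is the simultaneous fulfilment of the three structural demands on $W_t$—the prescribed value $\gamma'(t)$ at $\gamma(t)$, tangency to $Z$, and compact support in $B(0,\epsilon)$—together with \emph{joint} smoothness in $(t,x)$. The tangency is exactly what forces $\Phi_t(Z)=Z$, and it is the delicate point: the normal component $\ell_t$ degenerates as $\gamma(t)\to Z$, so joint smoothness requires $\gamma$ either to lie in $Z$ or to stay uniformly away from it, which, as observed above, is in any case necessary for the statement to be true. Everything else—completeness and smoothness of the flow, the identity $\Psi_t(\gamma(0))=\gamma(t)$ from uniqueness, and passing to the inverse $\Phi_t=\Psi_t^{-1}$—is routine once $W_t$ is in hand.
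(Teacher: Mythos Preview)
Your proof is correct and is essentially the paper's own argument, carried out with more care: the paper also takes a bump function $\sigma$ equal to $1$ on $B(0,\eta)$ and vanishing outside $B(0,\epsilon)$, sets $V(x)=\sigma(x)\,v_t$ with $v_t=\gamma(t)-\gamma(0)$, and simply says ``integrating this vector field we get the desired diffeomorphisms $\Phi_t$.'' Your observation that condition (3) forces $\gamma$ to lie entirely in $Z$ (or entirely off it), so that the simple field $\rho(x)\gamma'(t)$ is already tangent to $Z$ in the only case actually used (cf.\ the corollary that follows), is a point the paper does not address.
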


\begin{proof}
Let $v_t=\gamma(t)-\gamma(0).$ Let $\sigma: Y\to [0, 1]$ be a
differentiable function such that $\sigma=1$ on $B(0, \eta)$ and
$\sigma=0$ outside $B(0, \epsilon).$ Define a vector field
$V(x)=\sigma(x)v_t.$ Integrating this vector field we get desired
diffeomeorphisms $\Phi_t.$
\end{proof}

\begin{co}\label{lem}
Let $Y$ be a smooth manifold  and $Z$ be a smooth submanifold. For
every point  $a\in  Z$ there is  an open connected subset $U_a$
such that if  $\gamma : I\ni t\mapsto \gamma(t)\in U\cap Z$ is a
smooth curve, then there is a continuous family of diffeomorphism
$\psi_t : Y\to Y$, $t\in [0,1]$ such that

1) $\psi_t(\gamma(t))= \gamma(0),$

2) $\psi_t(x)=x$ for $x\not\in U$ and $\Phi_0={\rm identity},$

3) $\psi_t(Z)=Z.$
\end{co}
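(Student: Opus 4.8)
The plan is to reduce the global statement on the manifold $Y$ to the local linear model already handled in Lemma \ref{lemat}. The essential content is that Lemma \ref{lemat} is a purely local construction supported inside a ball, so it transplants to a coordinate chart. First I would fix the point $a\in Z$ and choose a chart $\varphi\colon U_a\to \R^n$ centered at $a$ which simultaneously flattens $Z$, i.e. such that $\varphi(U_a)$ is an open set of $\R^n$ and $\varphi(U_a\cap Z)=\varphi(U_a)\cap L$ for some linear subspace $L\subset \R^n$; such adapted charts exist because $Z$ is a smooth submanifold. Shrinking $U_a$, I may assume $\varphi(U_a)$ contains a ball $B(0,\epsilon)$ and that $\varphi(a)=0$.

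Next I would transport the curve into the model: given a smooth curve $\gamma\colon I\to U_a\cap Z$, set $\tilde\gamma=\varphi\circ\gamma$, a smooth curve in $\varphi(U_a)\cap L$. After a harmless reparametrization/translation so that $\tilde\gamma(0)=0$, and choosing $\eta$ with $\tilde\gamma(I)\subset B(0,\eta)$ and $\eta<\epsilon$, Lemma \ref{lemat} applied with $X=Y=\R^n$, $Z=L$ produces a continuous family of diffeomorphisms $\Phi_t\colon \R^n\to\R^n$ satisfying $\Phi_t(\tilde\gamma(t))=\tilde\gamma(0)$, $\Phi_t(L)=L$, $\Phi_0=\mathrm{id}$, and $\Phi_t=\mathrm{id}$ outside $B(0,\epsilon)$. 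I then define $\psi_t\colon Y\to Y$ by
$$
\psi_t(x)=\begin{cases}\varphi^{-1}\bigl(\Phi_t(\varphi(x))\bigr), & x\in U_a,\\[2pt] x, & x\notin U_a.\end{cases}
$$

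Finally I would check that $\psi_t$ is well defined and has the required properties. The key point making $\psi_t$ a well-defined diffeomorphism of all of $Y$ is that $\Phi_t$ is the identity outside $B(0,\epsilon)$: on the overlap region $\varphi^{-1}(B(0,\epsilon)^{c})\cap U_a$ the two defining formulas agree, so the pieces glue to a globally smooth map, and its inverse is built the same way from $\Phi_t^{-1}$. Conclusion (1) is immediate since $\psi_t(\gamma(t))=\varphi^{-1}(\tilde\gamma(0))=\gamma(0)$; conclusion (2) holds because $\psi_t$ is supported in $\varphi^{-1}(B(0,\epsilon))\subset U_a$ and $\psi_0=\mathrm{id}$; and conclusion (3) follows from $\Phi_t(L)=L$ together with $\varphi(U_a\cap Z)=\varphi(U_a)\cap L$, which gives $\psi_t(Z\cap U_a)=Z\cap U_a$ while $\psi_t$ fixes $Z$ outside $U_a$. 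I expect the only genuine obstacle to be the gluing step, that is, verifying smoothness and bijectivity across $\partial U_a$; this is handled precisely by the compact-support feature of Lemma \ref{lemat}, which I therefore regard as the crux rather than the routine chart bookkeeping.
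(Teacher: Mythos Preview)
Your approach is correct and is exactly what the paper intends: the corollary is stated without proof, as an immediate consequence of Lemma \ref{lemat} via an adapted submanifold chart, and your argument spells this out. One small technical point worth tightening: you should choose $U_a$ so that $\varphi(U_a)$ is \emph{contained} in some $B(0,\eta_0)$ with $\eta_0<\epsilon$ (not merely that $\varphi(U_a)$ contains $B(0,\epsilon)$), so that every curve $\gamma$ in $U_a\cap Z$ automatically satisfies $\tilde\gamma(I)\subset B(0,\eta_0)$ and Lemma \ref{lemat} applies with a uniform $\eta$; otherwise a curve in $U_a$ could leave $B(0,\epsilon)$ and the lemma would not cover it. The translation to arrange $\tilde\gamma(0)=0$ is unnecessary, since Lemma \ref{lemat} does not require this.
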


Now we are in a position to prove:
\newpage

\begin{theo}\label{par}
Let $X,Y$ be smooth affine irreducible varieties. Let $\cal F:
M\times X\to Y$ be an algebraic family of proper polynomial
mappings from $X$ to $Y.$ Assume that $M$ is an irreducible
variety. Then there exists a Zariski open dense subset $U\subset
M$ such that for every $m,m'\in U$ mappings $f_m$ and $f_{m'}$ are
topologically equivalent.
\end{theo}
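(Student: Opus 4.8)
The plan is to run the argument of Theorem~\ref{gl1}, but to strengthen its conclusion from ``finitely many types'' to ``a single type'' by showing that the index-$k$ subgroup of the fundamental group attached to $f_m$ does not vary as $m$ ranges over a suitable Zariski open dense set. First I would make the standard reductions: covering $M$ by affine pieces and passing to $\mathrm{Reg}(M)$ (which is Zariski open and dense), I may assume $M$ is smooth, affine and irreducible, hence connected in the analytic topology; every nonempty Zariski open subset of such an $M$ is again analytically connected. Next I record that properness forces $\mu(\cal F)>0$: since each $f_m:X\to Y$ is proper and $X$ is affine, every fibre $f_m^{-1}(y)$ is a compact analytic subset of an affine variety, hence finite, so $f_m$ is generically finite onto $Z_m=\overline{f_m(X)}$ with $\mu(f_m)>0$, and consequently $G:(m,x)\mapsto(m,F(m,x))$ is generically finite with $\mu(\cal F)=k>0$.

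Applying Lemma~\ref{gl'} I obtain a connected Zariski open dense $U\subset M$ together with homeomorphisms $\Psi_m:Y\to Y$ carrying $Z_m$ onto a fixed variety $Q$ and $B(f_m)$ onto a fixed set $B$; each normalised map $f'_m=\Psi_m\circ f_m$ is then a $k$-sheeted topological covering $f'_m:P_{f'_m}\to R:=Q\setminus B$ (Theorem~\ref{jk}), determining a subgroup $H_{f'_m}=(f'_m)_*\pi_1(P_{f'_m},a_{f'_m})\subset G:=\pi_1(R,a)$ of index $k$. The whole point is to prove that $H_{f'_m}$ is independent of $m\in U$.

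To see this I would assemble the individual coverings into one. Set $\tilde Z=(Z\setminus B(G))\cap(U\times Y)$; the projection $\tilde Z\to U$ is, by the Thom first isotopy trivialisation already used in Lemma~\ref{fiber}, a locally trivial fibre bundle with fibre $R$, and $G$ restricts to a single $k$-fold covering $\tilde P\to\tilde Z$ whose fibrewise restriction over $m$ is precisely $f'_m:P_{f'_m}\to R_m$. Fix $m,m'\in U$ and a path $\delta$ joining them in the connected set $U$. Trivialising the bundle along $\delta$ transports the fibre $R_{m'}$ onto $R_m$; because $\tilde P\to\tilde Z$ is one covering, this transport carries the covering $f'_{m'}$ onto $f'_m$, so the two coverings are isomorphic and the monodromy representation $G\to S_k$ is the same along $\delta$. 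The only delicate point is to realise this transport by an ambient homeomorphism of $Y$ that fixes the base point $a$: as $\delta$ is traversed the trivialisation drags $a$ along a curve lying in the smooth locus $R$, and this is exactly the situation of Corollary~\ref{lem} (resting on Lemma~\ref{lemat}), which supplies a compactly supported isotopy of $Y$ returning the point to $a$ while preserving $Q$ (hence $R$ and $B$, the support being chosen disjoint from the closed set $B$). Composing the $\Psi_m$ with these isotopies yields a homeomorphism $\Psi:Y\to Y$ with $\Psi(Q)=Q$, $\Psi(B)=B$, $\Psi(a)=a$ under which $f'_{m'}$ and $f'_m$ correspond with matching base points, so $H_{f'_{m'}}=H_{f'_m}$.

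Once the subgroups agree with a common base point, the conclusion is immediate from the covering-extension part of Theorem~\ref{gl1}: the equality $H_{f'_m}=H_{f'_{m'}}$ lets me lift to a homeomorphism $\phi:P_{f'_m}\to P_{f'_{m'}}$ with $f'_{m'}\circ\phi=f'_m$, and properness of both maps extends $\phi$ continuously across $B$ to a homeomorphism $\Phi:X\to X$ with $f'_{m'}\circ\Phi=f'_m$. Unwinding the normalisations gives $\Psi_{m'}^{-1}\circ\Psi_m\circ f_m\circ\Phi^{-1}=f_{m'}$, so $f_m$ and $f_{m'}$ are topologically equivalent for all $m,m'\in U$. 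I expect the main obstacle to be the local-constancy step, and within it the base-point bookkeeping: the Thom trivialisation only identifies the fibres up to a moving base point, and it is Corollary~\ref{lem}/Lemma~\ref{lemat} that pin the base point down so that the subgroups coincide exactly rather than merely up to conjugacy.
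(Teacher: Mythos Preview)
Your proposal is correct and follows essentially the same route as the paper: reduce via Lemma~\ref{gl'} to a fixed target pair $(Q,B)$, show that the index-$k$ subgroup attached to $f'_m$ is constant along paths in the connected set $U$, repair the moving base point with Corollary~\ref{lem}/Lemma~\ref{lemat}, and then finish with the covering-extension argument already carried out in Theorem~\ref{gl1}. The only cosmetic difference is that you phrase the local-constancy step in terms of a single global covering $\tilde P\to\tilde Z$ over a fibre bundle with fibre $R$, whereas the paper argues directly that for $t$ near $t_0$ the images $F_t(\gamma_i)$ of a fixed generating set of loops are homotopic to $F_{t_0}(\gamma_i)$ and then uses equality of indices to upgrade the inclusion of subgroups to an equality; the two formulations are equivalent and use the same inputs.
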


\begin{proof}
We follow the proof of Theorem \ref{gl1}. By Lemma \ref{gl'}
there is a non-empty open subset $U\subset M$ such that for every
$m_1, m_2\in U$ we have

1) $\mu(f_{m_1})=\mu(f_{m_2})=k,$

2) the pairs $(\overline{f_{m_1}(X)}, B(f_{m_1}))$ and
$(\overline{f_{m_2}(X)}, B(f_{m_2}))$  are equivalent by means of
a homeomorphism, i.e., there is a homeomorphism $\Psi : Y\to Y$
such that $\Psi(\overline{f_{m_1}(X)})=\overline{f_{m_2}(X)}$ and
$\Psi(B(f_{m_1}))=B(f_{m_2}).$

 Fix a pair $Q=\overline{f_{m_0}(X)}, B=B(f_{m_0})$ for some $m_0\in U.$ For $m\in
U$ the   mappings $f_m$ and $f_{m_0}$ can be connected by a
continuous path $f_t, f_0=f_{m_0}, f_1=f_m.$ Moreover we have also
a continuous family of homeomorphisms $\Psi_t : Y\to Y$ such that
$\Psi_t(\overline{f_{t}(X)})=\overline{f_{0}(X)}$ and
$\Psi_(B(f_{t}))=B(f_{0}).$ It is enough to prove that mappings
$F_t=\Psi_t\circ f_t$ are locally (in the sense of parameter $t$)
equivalent.

\vspace{5mm}

1) {\it First step of the proof.} Let $C_t\subset X$ denotes the
preimage by $F_t$ of the set $B$ (in fact $C_t=f_t^{-1}(B(f_t)$)
and put $X_t=X\setminus  C_t.$ Assume that for all mappings $F_t$
there is a point $a\in (X\setminus \bigcup_{t\in I} C_t)$ such
that for all $t\in I$ we have $F_t(a)=b.$ Put $Q':=Q\setminus B.$

We have an induced homomorphism ${G_t}_*: \pi_1(X_t,a)\to
\pi_1(Q',b)$. We show that the subgroup ${F_t}_*(
\pi_1(X_t,a))\subset \pi_1(Q',b)$ does not depend on $t.$

Indeed let $\gamma_1,..., \gamma_s$ be generators of the group
$\pi_1(X_{t_0}, a).$ Let $U_i$ be an open relatively compact
neighborhoods of $\gamma_i$ such that $\overline{U_i}\cap
C_{t_0}=\emptyset.$ For sufficiently small number $\epsilon>0$ and
$t\in (t_0-\epsilon,t_0+\epsilon)$ we have $\overline{U_i}\cap
C_{t}=\emptyset.$ Let $t\in (t_0-\epsilon,t_0+\epsilon)$. Note
that the loop $F_t(\gamma_i)$ is homotopic with the loop
$F_{t_0}(\gamma_i).$ In particular the group ${F_{t_0}}_*(
\pi_1(X_{t_0},a))$ is contained in the group ${F_t}_*(
\pi_1(X_t,a)).$ Since they have the same index in  $\pi_1(Y',b)$
they are equal. This means that the subgroup ${G_t}_*(
\pi_1(X_t,a))\subset \pi_1(Y',b)$ is locally constant, hence it is
constant.

Let us consider two coverings $F_t: (X_t, a)\to (Q',b)$ and $F_0:
(X_0, a)\to (Q',b)$. Since ${F_t}_* \pi_1(X_t, a)={F_0}_*\pi_1
(X_0, a)$ we can lift the covering $F_t$ to a homeomorphism
$\phi_t: X_t\to X_0.$ As before we can extend the mapping $\phi_t$
to the mapping $\Phi_t: X\to X$ which satisfies all desired
conditions.

\vspace{5mm}

2) {\it The general case.} Now we can prove Theorem \ref{par}.
First we prove that for every $t_0\in I$ there exists $\epsilon>0$
and a  family of diffeomorphisms $\Phi_t: X\to X$, $t\in
(t_0-\epsilon, t_0+\epsilon)$ such that $F_t=F_{t_0}\circ \Phi_t$
for $t\in (t_0-\epsilon, t_0+\epsilon).$ Take a point $a\in
X_{t_0}$ and choose $\epsilon>0$ so small that $a\in X_t$ for
$t\in (t_0-\epsilon, t_0+\epsilon)$. Put $\gamma(t)\ni t\mapsto
F_t(a)\in Y'.$ We can take $\epsilon$ so small that the hypothesis
of Corollary \ref{lem} is satisfied. Applying Corollary \ref{lem}
with $Y'=Y\setminus B$ and $Z=Q\setminus B$ we have a continuous
family of diffeomeorphisms $\psi_t: Y\to Y$ which preserves $Q$
and $B$, $t\in (t_0-\epsilon, t_0+\epsilon)$ such that
$\psi_t(F_t(a))=F_{0}(a).$ Take $G_t=\psi_t\circ F_t.$ Arguing as
in the first part of our proof all $G_t$ are topologically
equivalent  for $t\in (t_0-\epsilon, t_0+\epsilon)$. Hence also
all $F_t$ are topologically equivalent for $t\in (t_0-\epsilon,
t_0+\epsilon)$. Since $F_t$ are locally topologically equivalent,
they are topologically equivalent for every $t\in I.$
\end{proof}

\begin{co}
Let $n\le m$ and let $\Omega_n(d_1,...,d_m)$ denotes the family of
all polynomial mappings $F=(f_1,...,f_m):\C^n\to\C^m$ of a fixed
multi-degree $(d_1,...,d_m).$ Then any two general member of this
family have the same topology.
\end{co}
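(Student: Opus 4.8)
The plan is to realize $\Omega_n(d_1,\dots,d_m)$ as an algebraic family over an irreducible base and then invoke Theorem \ref{par}. First I would take $M$ to be the affine space of all coefficient tuples of multi-degree $(d_1,\dots,d_m)$ mappings; evaluating such a tuple at a point of $\C^n$ gives a regular map $\mathcal F : M\times \C^n \to \C^m$, so that $\Omega_n(d_1,\dots,d_m)$ is exactly the family $\{f_m\}_{m\in M}$. Since $M$ is an affine space it is smooth and irreducible, and $X=\C^n$, $Y=\C^m$ are smooth affine irreducible, so every hypothesis of Theorem \ref{par} is met except one: that theorem requires every member of the family to be \emph{proper}, whereas a general member of $\Omega_n$ need not be. Hence the real work is to exhibit a Zariski open dense subset $M_0\subset M$ over which all $f_m$ are proper.

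The hard (and essentially only substantial) step is this generic properness, and here the hypothesis $n\le m$ enters. For $F=(f_1,\dots,f_m)$ let $L_i$ denote the leading (degree-$d_i$ homogeneous) part of $f_i$. I claim that if the forms $L_1,\dots,L_m$ have no common zero in $\mathbb{P}^{n-1}$ — equivalently, $L_1(x)=\dots=L_m(x)=0$ forces $x=0$ in $\C^n$ — then $F$ is proper. To see this, write any $x$ with $\|x\|=r$ large as $x=ru$, $\|u\|=1$; then $f_i(x)=r^{d_i}L_i(u)+O(r^{d_i-1})$, and given a sequence $x_k\to\infty$ one passes to a subsequence with $u_k\to u$ on the compact unit sphere. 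Since the $L_i$ have no common zero, $L_j(u)\neq0$ for some $j$, whence $|f_j(x_k)|\ge r_k^{d_j}\bigl(\frac{1}{2}|L_j(u)|-C/r_k\bigr)\to\infty$ for a suitable constant $C$; thus $\|F(x_k)\|\to\infty$ and $F$ is proper.

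Next I would show this no-common-zero condition is generic precisely because $n\le m$. The locus in the space of leading coefficients where $L_1,\dots,L_m$ \emph{do} have a common zero in $\mathbb{P}^{n-1}$ is closed: it is the image, under projection to the coefficients, of the incidence variety $\{(\mathrm{coeffs},[x]): L_i(x)=0\ \forall i\}$, which is closed and whose projection is closed by the main theorem of elimination theory (completeness of $\mathbb{P}^{n-1}$). Its complement is therefore Zariski open, and it is nonempty: $m$ generic hypersurfaces in the $(n-1)$-dimensional space $\mathbb{P}^{n-1}$ meet in dimension $n-1-m<0$ when $m\ge n$, so a generic tuple has empty common zero locus (for instance $L_i=x_i^{d_i}$, $i\le n$, forces $x_1=\dots=x_n=0$). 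Pulling back along the projection of $M$ onto its leading coefficients, the set $M_0$ of tuples with proper $F$ contains a nonempty Zariski open subset of $M$, hence is dense; being open and dense in the irreducible $M$, it is itself irreducible.

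Finally I would apply Theorem \ref{par} to the family $\mathcal F$ restricted to $M_0\times X : M_0\times\C^n\to\C^m$, which now genuinely is an algebraic family of proper mappings over an irreducible base. It produces a Zariski open dense subset $U\subset M_0$ with $f_m$ and $f_{m'}$ topologically equivalent for all $m,m'\in U$. Since $M_0$ is dense in $M$, the set $U$ is a dense Zariski open subset of $M$, i.e.\ a locus of general members; therefore any two general members of $\Omega_n(d_1,\dots,d_m)$ are topologically equivalent, which is the asserted equality of topology. The only point demanding genuine care is the generic properness argument above; the rest is packaging of the universal family and a citation of Theorem \ref{par}.
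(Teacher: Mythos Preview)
Your approach is correct and is exactly the intended derivation: the paper states this corollary without proof, as an immediate consequence of Theorem~\ref{par}, and you have correctly supplied the one nontrivial detail left implicit---that a generic member is proper when $n\le m$---via the standard leading-form criterion. One minor slip: you define $M_0$ as the full locus of proper maps and then call it ``open and dense,'' but you only showed it \emph{contains} the Zariski open set where the leading forms have no common projective zero; simply take that open set itself (nonempty, Zariski open in the irreducible $M$, hence irreducible) as the base for Theorem~\ref{par}, and the argument goes through verbatim.
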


\vspace{5mm}

\end{document}